\documentclass[11pt]{article}

\usepackage[utf8]{inputenc}
\usepackage[T1]{fontenc}
\usepackage{mathpazo}

\RequirePackage[english]{babel}

\RequirePackage[a4paper,margin=2.5cm]{geometry}
\RequirePackage{parskip}

\newcommand{\affiliation}{\footnote}
\makeatletter
\def\@fnsymbol#1{\ensuremath{\ifcase#1\or *\or \dagger\or \ddagger\or \mathsection\or \|\or **\or \dagger\dagger \or \ddagger\ddagger \else\@ctrerr\fi}}
\makeatother

\usepackage{xcolor}
\definecolor{cblue}{RGB}{0,70,140}
\definecolor{cgreen}{RGB}{100,140,0}
\definecolor{cred}{RGB}{190,10,50}

\usepackage[shortlabels]{enumitem}
\setlist{itemsep=0ex,topsep=0ex,parsep=0.4ex}

\usepackage{amsmath,amsfonts,amssymb,amsthm,mathtools,bbm,centernot}
\usepackage[hyphens]{url} % Line breaking of urls
\usepackage[hidelinks,colorlinks,pagebackref]{hyperref} % Coloring of hyperlinks and backreferences in bibliography, must be loaded after url
\hypersetup{citecolor=cgreen,linkcolor=cblue,urlcolor=cblue}
\usepackage[capitalise,nameinlink,noabbrev,compress]{cleveref} % References in the document; must be loaded after hyperref and amsmath

\renewcommand*{\backref}[1]{}
\renewcommand*{\backrefalt}[4]{
	\ifcase #1 Not cited.%
	\or $\uparrow$#2%
	\else $\uparrow$#2%
	\fi%
}

\let\oldbibliography\bibliography
\renewcommand{\bibliography}[1]{
  {
    \hypersetup{linkcolor=cred}
    \bibliographystyle{bibstyle}
    \oldbibliography{#1}
  }
}

\theoremstyle{plain}
\newtheorem{theorem}{Theorem}[section]
\newtheorem{lemma}[theorem]{Lemma}

\newtheorem{question}[theorem]{Question}

\theoremstyle{definition}
\newtheorem{definition}[theorem]{Definition}

\makeatletter
\renewenvironment{proof}[1][\proofname]
{\par\pushQED{\qed}
	\normalfont\topsep6\p@\@plus6\p@\relax\trivlist
	\item[\hskip\labelsep\bfseries#1\@addpunct{.}]
	\ignorespaces}
{\popQED\endtrivlist\@endpefalse}
\makeatother

\let\emptyset\varnothing

\newcommand{\cO}{\mathcal{O}}

\newcommand{\dist}{\operatorname{dist}}
\newcommand{\N}{N}

\title{Domination--packing ratio for planar and unit disk graphs}

\author{ W. Cames van Batenburg\affiliation{D\'epartement d'Informatique, Universit\'e libre de Bruxelles, Belgium (\textsf{\href{mailto:w.p.s.camesvanbatenburg@gmail.com}{w.p.s.camesvanbatenburg@gmail.com}}). Supported by the Belgian National Fund for Scientific Research (FNRS).}}
\date{}

\begin{document}
    \maketitle

\begin{abstract}
%We prove that for every planar graph, the domination number is at most $5$ times the packing number. This improves the bound of $7$ due to D\'ucz and Gujgiczer, who in turn improved the bounds $10$ due to Bonamy, Csik\'os, Gujgiczer and Yuditsky and the earlier $20$ due to B\"ohme and Mohar. We also show that for every unit disk graph, the domination number is at most $\frac{18\sqrt3}{\pi}\approx 9.924$ times the packing number. This improves the bound of $16$ due to Guti\'errez and Paul, who in turn improved the bound $32$ due to Bonamy, Csik\'os, Gujgiczer and Yuditsky. For both graph classes, the best known lower bound on the optimal constant remains $3$.

%We prove that for every planar graph, the domination number is at most $5$ times the packing number. We also show that for every unit disk graph, the domination number is at most $\frac{18\sqrt3}{\pi}\approx 9.924$ times the packing number. This improves upon bounds of D\'ucz and Gujgiczer, and Guti\'errez and Paul, who in turn lowered bounds of Bonamy, Csik\'os, Gujgiczer and Yuditsky, and B\"ohme and Mohar. For both graph classes, the best known lower bound on the optimal constant remains $3$.\\

The \emph{domination number} $\gamma(G)$ of a graph $G$ is the smallest possible size of a vertex set that intersects every radius-$1$ ball of $G$, and the \emph{packing number} $\rho(G)$ is the maximum number of pairwise vertex-disjoint radius-$1$ balls.
We prove that $\frac{\gamma(G)}{\rho(G)}\le 5$ for every planar graph, and $\frac{\gamma(G)}{\rho(G)} \le \frac{18\sqrt3}{\pi}\approx 9.924$ for every unit disk graph, thus yielding Erd\H{o}s--P\'osa-type bounds for the hypergraph of radius-$1$ balls in the two graph classes. This improves upon results of Guti\'errez and Paul, and D\'ucz and Gujgiczer, who in turn lowered bounds of Bonamy, Csik\'os, Gujgiczer and Yuditsky, and B\"ohme and Mohar. For both graph classes, the best known lower bound on the optimal constant remains $3$.\\

\end{abstract}

\section{Introduction}
Duality between packing and covering is a well-studied theme in structural graph theory: given a family of `nice' objects, one seeks either many pairwise disjoint nice objects, or a small set that intersects every nice object.  Statements of this type fall under the umbrella of \emph{Erd\H{o}s--P\'osa} theorems, which can often be encoded in terms of some class $\mathcal{H}$ of hypergraphs.
For every hypergraph $H\in \mathcal{H}$,  the nice objects correspond to the hyperedges of $H$. If $\rho$ denotes the maximum number of pairwise disjoint hyperedges and $\gamma$ the minimum size of a vertex set that intersects every hyperedge of $H$, then a \emph{bounding function} for $\mathcal{H}$ (if it exists!) is a function $f$ such that $\gamma\le f(\rho)$ for every $H\in \mathcal{H}$.  In the foundational result of Erd\H{o}s and P\'osa~\cite{ErdosPosa65}, the hyperedges are the cycles of some fixed graph: they proved that every graph either contains $k$ vertex-disjoint cycles, or a set of $\cO(k\log k)$ vertices meeting every cycle. Thus, for cycles there exists a bounding function $f(\rho)=O(\rho  \log \rho)$. Since then, the same question has been studied for many more structured objects, such as directed cycles~\cite{ReedRobertsonSeymourThomas96}, matroid circuits~\cite{GeelenKabell09}, cycles in group-labelled graphs~\cite{GollinHendreyKwonOumYoo25} and graph minor models~\cite{RobertsonSeymour86, CvBHJR19}, see also the survey~\cite{RaymondThilikos17}.

A particularly relevant special case for this note is that of \emph{ball hypergraphs}.  Given a graph $G$, a ball hypergraph of $G$ has vertex set $V(G)$ and hyperedges given by graph balls $B_r(v)=\{u\in V(G):\operatorname{dist}_G(u,v)\le r\}$, possibly with different radii.  In~\cite{BousquetEtAl21}, Bousquet et al. proved that for every graph $\Gamma$ there exists a constant $c$ such that every ball hypergraph of a $\Gamma$-minor-free graph satisfies $\gamma\le c \cdot \rho$. In particular this gave a linear bounding function for ball hypergraphs of planar graphs, even
when not all balls have the same radii, confirming  a conjecture of Chepoi, Estellon and Vax\`es~\cite{ChepoiEstellonVaxes07}.  Earlier progress, with constants depending on the radius or on weak colouring numbers, was obtained by Dvo\v{r}\'ak~\cite{Dvorak13,Dvorak19}.

In this note we focus on the very special case that all balls have radius $1$. Then the covering problem reduces to the usual domination problem.  More precisely, from now on, for a graph $G$, the \emph{domination number} $\gamma(G)$ is the minimum size of a set $D\subseteq V(G)$ such that its closed neighbourhood $\N_G[D]$ is the whole vertex set $V(G)$, equivalently, such that $D$ has non-empty intersection with every radius-$1$ ball. And the \emph{packing number} $\rho(G)$ is the maximum size of a set $P\subseteq V(G)$ whose closed neighborhoods are pairwise disjoint, equivalently, whose vertices are pairwise at distance larger than $2$. Because the term `packing number' is heavily overloaded in graph theory, referring to numerous very different notions of disjoint structures, a common alternative name for $\rho(G)$ is the \emph{$2$-independence number}. 

Clearly $\rho(G)\le \gamma(G)$.  This inequality is sharp for trees~\cite{MeirMoon75} and strongly chordal graphs~\cite{Farber84},
but it can fail arbitrarily badly even in sparse graphs: Dvo\v{r}\'ak~\cite{Dvorak13}
constructed bipartite $3$-degenerate graphs with $2$-independence number $2$
and unbounded domination number. An elegant basic positive bound is
$\gamma(G)\le \Delta(G)\rho(G)$~\cite{HenningLowensteinRautenbach11}, where $\Delta(G)$ denotes the maximum degree of $G$. Studies of the case $\Delta(G)=3$ and other variants can be found in ~\cite{GomezGutierrez25, ChoKim24, BoyerWayneHenning26}. 
For line graphs there is a matching interpretation: $\gamma(L(H)$ is the minimum size of a maximal matching of $H$, whereas $\rho(L(H))$ is the maximum size of an induced matching; see~\cite{CamesVanBatenburg22} for related conjectures and context in the $\Delta(H)=3$ setting.
Recently, Bonamy, Dvo\v{r}\'ak, Michel and Mik\v{s}an\'ik~\cite{BonamyDvorakMichelMiksanik26} gave an exact characterization of the monotone graph classes of bounded average degree that admit a linear bounding function $\gamma(G) \le c \cdot \rho(G)$, for some constant $c$ that only depends on the class.\\

In a recent work, Bonamy, Csik\'os, Gujgiczer and Yuditsky~\cite{BonamyCsikosGujgiczerYuditsky25} undertook a systematic study of graph classes with constant domination-packing ratio.  Among other results, they proved that $\gamma(G)\le 10\,\rho(G)$ for planar graphs, thus lowering a $\gamma(G)\le 20 \,\rho(G)-9$ bound of B\"ohme and Mohar~\cite{BohmeMohar03}. They also showed that $\gamma(G)\le 32\,\rho(G)$ for every \emph{unit disk graph} $G$, which by definition is a graph admitting a representation $p:V(G)\to\mathbb R^2$ such that, for distinct vertices $u,v$, we have $uv\in E(G)$ if and only if the Euclidean distance between $p(u)$ and $p(v)$ is $\le 1$.

In two separate works, the bounds for both classes have since been improved.
D\'ucz and Gujgiczer~\cite{DuczGujgiczer26} improved the planar bound from $10$ to $7$, while the best known lower bound for planar graphs is $3$ due to a graph on nine vertices exhibited by MacGillivray and Seyffarth~\cite{MacGillivraySeyffarth96}. For unit disk graphs, Guti\'errez and Paul~\cite{GutierrezPaul26} improved the bound from $32$ to $16$ and also gave an infinite family with ratio $3$.  
In this note we further lower the upper bounds for both graph classes.

\begin{theorem}\label{thm:ordinary}
For every finite simple planar graph $G$,
$$
    \gamma(G)\leq 5\,\rho(G).
$$
\end{theorem}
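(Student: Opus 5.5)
The plan is to fix a \emph{maximum} packing $P$, with $|P|=\rho(G)$, and build a dominating set $D\supseteq P$ of size at most $5|P|$. The charging scheme is $|P|$ for $P$ itself, at most $|P|$ vertices charged to individual balls, and at most $3|P|$ vertices charged to adjacencies between balls, the last bound coming from Euler's formula.

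\textbf{Setup.} Since $P$ is maximal, every vertex lies within distance $2$ of $P$. The set $P$ already dominates $\N_G[P]$, so it suffices to dominate $U:=V(G)\setminus \N_G[P]$. Each $u\in U$ has a neighbour in some $\N(p)$ with $p\in P$; say $u$ is \emph{attached} to $p$ in that case. The balls $\N_G[p]$, $p\in P$, are pairwise disjoint and connected. Contract each of them to a single vertex and then delete $U$, keeping as an edge $pq$ every pair of balls that are joined by an edge, or have a common neighbour in $U$ (route such a $u$ into one of the two balls). The result is a simple planar graph $H$ on vertex set $P$. Hence $|E(H)|\le 3|P|-6$ when $|P|\ge 3$, and the small cases are handled directly.

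\textbf{Charging.} A vertex $u\in U$ attached to at least two balls $p,q$ certifies the edge $pq\in E(H)$. A vertex attached to a single ball is \emph{private} to that ball. I plan to show the following two claims.
\begin{enumerate}[(a)]
\item For each $p\in P$, one vertex of $\N(p)$ dominates all private vertices of $p$ that are not otherwise handled.
\item For each edge $pq\in E(H)$, one vertex dominates all $U$-vertices attached to both $p$ and $q$, perhaps after reassigning some vertices to an incident edge.
\end{enumerate}
Together these give $|D|\le |P|+|P|+3|P|=5|P|$.

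\textbf{Exchange argument.} Both claims should come from the maximality of $|P|$. Suppose two vertices $u,u'$ attached to $p$ (or to the pair $p,q$) could not be dominated by a single chosen vertex, because they are far apart in $G-\N[P\setminus\{p\}]$ or have no common neighbour in $\N(p)$. Then replacing $p$ (or $p,q$) by suitable vertices would produce a larger packing. The intended swaps are $\{u,u'\}$ for $p$, or three vertices for the pair $\{p,q\}$, and the goal is to check that they stay at distance at least $3$ from $P\setminus\{p\}$ (respectively $P\setminus\{p,q\}$) and from each other. When a clean swap is not available, I would use planarity locally: the $U$-vertices attached to both $p$ and $q$ lie in the faces between the two contracted balls. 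Ordering them along the region bounded by $p$, $q$ and the paths through them, two disjoint paths would cross unless the vertices nest. This nesting should give a single vertex, such as a common neighbour on the ``innermost'' path, that dominates all of them.

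\textbf{Main obstacle.} I expect claim (b) to be the hard step: showing that vertices attached to two balls cost only $O(1)$ per edge of $H$, and in fact exactly $1$. Vertices attached to three or more balls, and $U$-vertices adjacent to each other, make the exchange argument delicate. If one vertex per edge fails, my fallback is to charge to faces of $H$ instead, using $|F(H)|\le 2|P|-4$, and to split the budget as $1$ per edge and at most one more per vertex. I would also pick $P$ extremal, for instance minimizing $|U|$ among maximum packings, to rule out configurations that need extra vertices.
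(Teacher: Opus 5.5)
There is a genuine gap. Claims (a) and (b) carry the entire argument, you do not prove them, and both are false as stated.

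Your exchange argument only gives \emph{pairwise} information. If $u,u'$ are private to $p$, then $\dist_G(u,q)\ge 3$ for every $q\in P\setminus\{p\}$, so maximality of $|P|$ forces $\dist_G(u,u')\le 2$. But vertices at pairwise distance at most $2$ need not have a common dominator, and planarity does not rescue this. The planar graph of MacGillivray and Seyffarth (cited in the paper) has diameter $2$ and domination number $3$. There every maximum packing is a single vertex $\{p\}$, $H$ has no edges, and every vertex of $U=V(G)\setminus N[p]$ is private to $p$. So (a) would give a dominating set $\{p,x\}$ of size $2$, a contradiction.

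Claim (b) fails as well. In $C_8=v_0v_1\cdots v_7$, take the maximum packing $\{v_0,v_4\}$. Then $U=\{v_2,v_6\}$, both vertices are attached to both balls, they are at distance $4$, and $H$ has a single edge. No three-vertex swap exists because $\rho(C_8)=2$. Minimizing $|U|$ does not exclude this packing, since $\{v_0,v_3\}$ also has $|U|=2$.

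Moreover, $H$ as you define it need not be planar. Take a spider with centre $u$ and five legs $u\,a_i\,p_i$. The leaves $p_1,\dots,p_5$ form a maximum packing, since every closed neighbourhood contains some $a_i$. The vertex $u$ is a common neighbour of all five balls, so your $H$ is $K_5$. Contracting $u$ into one ball certifies only the edges at that ball. The ``nesting'' argument and the face-charging fallback are not developed far enough to repair any of this.

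The missing idea is a mechanism that turns ``pairwise within distance $2$'' into a bounded number of dominators, with constants summing to $5$. A single global maximum packing with local swaps does not supply it. The paper sidesteps the problem in three steps:
\begin{enumerate}[(i)]
\item It proves the stronger statement $\gamma_X(G)\le 5\rho_X(G)$ by minimal counterexample.
\item A vertex $v\in X$ of degree at most $5$ is reducible: put $v$ into the packing and $N(v)$ into the dominating set, then recurse on $X'=\{u\in X\setminus\{v\}:\dist_G(u,v)\ge 3\}$. Distances are still measured in $G$, so the recursion is sound, and the cost is at most $5$ per packing vertex.
\item A discharging argument then rules out the remaining configurations. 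It uses the fact that every $y\in\overline{X}$ has, for each neighbour $a$, another neighbour at distance at least $3$ from $a$ in $G-y$.
\end{enumerate}
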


Similar to the arguments in~\cite{DuczGujgiczer26, BonamyCsikosGujgiczerYuditsky25}, the proof proceeds via a stronger theorem in terms of refinements $\gamma_X(G)$ and $ \rho_X(G)$ of $\gamma(G)$ and $\rho(G)$. These keep track of domination and packing on an arbitrary subset $X\subseteq  V(G)$, which helps overcome the technical difficulty that a packing on a subgraph may no longer be a packing in the full graph because the addition of a vertex (plus its incident edges) may decrease distances. We proceed by minimum counterexample and use a discharging argument. The main reason why we were able to improve on the argument of~\cite{DuczGujgiczer26} is the simple yet effective Lemma~\ref{lem:neighbourslikeeachother}.

\begin{theorem}\label{thm:main}
For every finite unit disk graph $G$,
$$\gamma(G)\le \frac{18\sqrt3}{\pi}\,\rho(G) <10\,\rho(G).$$
\end{theorem}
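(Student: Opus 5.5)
The constant $\frac{18\sqrt3}{\pi}$ suggests an area argument: $\frac{\pi}{2\sqrt3}$ is the density of the hexagonal disk packing, and $18 = \frac{\pi \cdot 3^2}{\pi/2}$-type factors arise from comparing an area of radius $3$ (or radius $2$ plus a margin) to disks of radius $1/2$. The plan is therefore as follows. Take a maximum packing $P$, so $|P|=\rho(G)$ and any two points of $P$ are at graph distance at least $3$. By maximality, every vertex is within graph distance $2$ of $P$. Assign each vertex $v$ to a nearest point $\pi(v)\in P$; the set $S_p=\pi^{-1}(p)$ lies in the graph ball $B_2(p)$ and hence in the Euclidean disk of radius $2$ around $p$. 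It then suffices to dominate each $S_p$ with few vertices and to sum over $p$. A naive bound, using the number of radius-$\frac12$-separated points in a radius-$2$ disk, gives roughly $(2.5)^2\cdot\frac{2\sqrt3}{\pi}\cdot\ldots$, which is not the target. The aim is instead an averaged charge.

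Concretely, I would build the dominating set greedily as a maximal independent set $D$ (a maximal independent set is dominating). Its points are pairwise at Euclidean distance $>1$, so the open disks of radius $\frac12$ around $D$ are disjoint. Each such disk has area $\pi/4$, and by the hexagonal packing density any region $R$ contains at most about $\frac{2\sqrt3}{\pi}\cdot\frac{\operatorname{area}(R^{+1/2})}{1}$ points of $D$, i.e.\ density $\frac{2}{\sqrt3}$ points per unit area. To get $\frac{18\sqrt3}{\pi}=\frac{2}{\sqrt3}\cdot\frac{27}{\pi}$, I need to charge each $p\in P$ an area of $\frac{27}{\pi}\cdot$ (unit), hmm, more naturally: $\frac{18\sqrt3}{\pi}=\frac{\pi\cdot 3^2}{\pi}\cdot\frac{2\sqrt3}{\pi}\cdot\ldots$. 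The cleaner target is to show that $D$ can be chosen so that the points of $D$ near $p$ lie within a disk of area $9\pi/\ldots$. I would first try choosing $D$ as a maximal independent set of $G$ and show that every $d\in D$ lies within Euclidean distance $2$ of some $p\in P$. The relevant regions are then disks of radius $2$ (enlarged by $\frac12$ for the packing margin), and I would use a Voronoi-type partition. The $\frac{18\sqrt3}{\pi}$ then comes from a density bound of points with mutual distance $>1$ in a region of area $\frac{9\sqrt3}{\ldots}$. I would fix the exact accounting by the \emph{Voronoi cells of $P$}: since points of $P$ are at graph distance $\ge3$, I would prove that the Voronoi cells can be assigned areas averaging a fixed amount, for example a hexagon of circumradius related to $3$.

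The main obstacle is that distinct packing points can be Euclidean-close (graph distance $3$ does not imply Euclidean distance $>2$ in general, since only distance $>1$ between non-adjacent vertices is guaranteed). So naive per-$p$ disks overlap, and one must instead bound $|D|$ by the area of the union $\bigcup_p B(p,2.5)$ divided by the hexagonal cell area $\frac{\sqrt3}{2}$, while counting that union as at most $|P|$ times a fixed area. I would try a weighted/fractional version: replace $D$ by a fractional domination argument (LP duality, $\gamma\le$ an $\ln$-free rounding fails, so prefer the integral greedy), and handle the overlap by showing that the union area per packing point is at most $\frac{27}{2}\cdot\frac{\sqrt3}{2}\cdot\frac{\pi}{\ldots}$. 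Pinning down this per-point area bound is where I expect the real difficulty.
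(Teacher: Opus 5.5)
Your proposal stops at the step that carries all the content, a per-packing-point bound on the area of $\bigcup_{p\in P}B(p,2.5)$, and that step cannot be completed as you set it up. By your own decomposition $\frac{18\sqrt3}{\pi}=\frac{2}{\sqrt3}\cdot\frac{27}{\pi}$, you would need to charge each $p\in P$ an area of only $\frac{27}{\pi}\approx 8.6$. Yet an isolated packing point already forces you to account for the whole disk $B(p,2)$, of area $4\pi\approx 12.6$.

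More fundamentally, the only property of $P$ your argument uses is that every vertex lies within graph distance $2$ of $P$. This holds for every \emph{maximal} packing, and it is far too weak. Take the centre, six points at radius $1.01$ and angles $15^\circ+60^\circ j$, and twelve equally spaced points on the circle of radius $2$: these are $19$ points with pairwise distances $>1$ inside $B(p,2)$. Add connector vertices at the midpoints between the centre and each ring point. This gives a unit disk graph in which $\{p\}$ is a maximal packing and these $19$ points form a maximal independent set. So no charging of $D$ to neighbourhoods of $P$ based only on ``everything is within distance $2$ of $P$'' can give a constant below $19$. The Euclidean closeness of packing points, which you single out as the main obstacle, only shrinks the union and is not the real issue.

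The missing idea is to exploit that $\rho$ is a \emph{maximum} by actually constructing a large packing, which means reversing the direction of the argument.
\begin{itemize}
\item Take a maximal independent set $I$. Then $\gamma(G)\le|I|$, and the points of $I$ are pairwise at distance $>1$.
\item Extract from $I$ a subset whose points are pairwise at distance $>2$. These vertices form a packing, because a common neighbour $w$ of two of them would put them at Euclidean distance $\le 2$ by the triangle inequality.
\item The paper does the extraction with a uniformly random translate $S+t$ of $S=\bigcup_{\lambda\in\Lambda}B(\lambda,\frac12)$. Here $\Lambda$ is the triangular lattice with minimum distance $3$, and the disks are open.
\item Two points of $I$ cannot share a disk, since they would then be at distance $<1$. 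Points in different disks are at distance $>3-\frac12-\frac12=2$.
\item Each fixed point lies in $S+t$ with probability $\frac{\pi/4}{9\sqrt3/2}=\frac{\pi}{18\sqrt3}$. Hence some translate gives $\rho(G)\ge\frac{\pi}{18\sqrt3}|I|\ge\frac{\pi}{18\sqrt3}\gamma(G)$.
\end{itemize}
The constant is therefore the reciprocal of the density of a lattice arrangement of radius-$\frac12$ disks, not an area charged to each packing point.
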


The proof takes a maximal  independent (and hence dominating) set $I$ of the unit disk graph, considers the points $X\subseteq \mathbb{R}^2$  that represent $I$, and then uses a randomly shifted triangular grid to extract a large subset $Y\subseteq X$ of points that are at pairwise Euclidean distance $>2$. The vertices corresponding to $Y$ have disjoint closed neighborhoods and thus form a packing.

%On the algorithmic side, we remark that da Fonseca, de Figueiredo, Pereira de S\'a and Machado~\cite{DaFonsecaEtAl14} gave an efficient $\frac{43}{9}$-approximation algorithm for $\frac{\gamma(G)}{\rho(G)}$ for unit disk graphs $G$. This note does not improve upon that.
%Hmmm... it seems there also exist (1+\epsilon) PTAS, so I'm not sure of the state of the art. Let's leave these remarks in the comments just in case they are useful later.
%Efficient sub-$5$ approximation algorithms for the minimum dominating set problem in unit disk graphs: a $44/9$-approximation in $O(n+m)$ time from the adjacency representation, the same factor in $O(n\log n)$ time from a geometric representation, and a $43/9$-approximation in $O(n^2m)$ time from the adjacency representation.

\section{Planar graphs}
In this section we prove Theorem~\ref{thm:ordinary}.
The main difficulty in an inductive domination--packing argument is that deleting a vertex may change distances between the remaining vertices.  We use the following refinements to keep track of the vertices that are relevant on both sides of the inequality.

\begin{definition}\label{def:X}
Let $G$ be a graph and let $X\subseteq V(G)$.
\begin{enumerate}[label=\textup{(\roman*)}]
    \item A set $D\subseteq V(G)$ is an \emph{$X$-dominating set} if
    $X\subseteq \N_G[D].$ 
    The minimum cardinality of an $X$-dominating set is denoted by $\gamma_X(G)$.

    \item A set $P\subseteq X$ is an \emph{$X$-packing} if
    $\dist_G(p,q)\geq 3$
    for all distinct $p,q\in P$.  Equivalently, the closed neighborhoods of distinct vertices of $P$ are disjoint.  The maximum cardinality of an $X$-packing is denoted by $\rho_X(G)$.
\end{enumerate}
\end{definition}

Writing $\overline{X}=V(G)\setminus X$
for the complement of a set $X$ in $V(G)$, we note that Definition~\ref{def:X} is effectively the same as definitions of D\'ucz and Gujgiczer~\cite{DuczGujgiczer26}, who adapted a definition introduced by Bonamy, Csik\'os, Gujgiczer and Yuditsky~\cite{BonamyCsikosGujgiczerYuditsky25}. However, what we call an $X$-dominating set would be an $\overline{X}$-dominating set in their terminology. We have made this notational swap, because we found it more intuitive to work with an $X$-dominating set that is in fact a set that dominates $X$, and that an $X$-packing is a packing whose vertices must be chosen from $X$.\\ 

With the choice $X=V(G)$, Definition~\ref{def:X} yields
$\gamma_{V(G)}(G)=\gamma(G)$ and $\rho_{V(G)}(G)=\rho(G).$ Consequently, Theorem~\ref{thm:ordinary} will be a direct corollary of the following stronger statement.

\begin{theorem}\label{thm:refined}
For every finite simple planar graph $G$ and every $X\subseteq V(G)$,
$$\gamma_X(G)\leq 5\rho_X(G).$$
\end{theorem}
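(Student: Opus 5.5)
We argue by minimum counterexample. Choose a pair $(G,X)$ with $G$ planar and $\gamma_X(G)>5\rho_X(G)$ minimizing $|V(G)|+|E(G)|$ (and then, say, maximizing $|X|$ or minimizing $|X|$, whichever makes the reductions cleaner). First we derive structural properties by local reductions. We can assume every vertex of $\overline{X}$ is useful. An edge between two vertices of $\overline{X}$ can be deleted: this keeps $\gamma_X$ unchanged, since domination of $X$ only uses edges with an endpoint in $X$. It also can only increase distances, so the packing number does not go up. Hence $\overline{X}$ is independent. Similarly, any vertex of $\overline{X}$ whose neighbourhood in $X$ is contained in that of another vertex is redundant and can be deleted. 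Such a deletion preserves $\gamma_X$ and cannot shrink $\rho_X$ below what is needed, once we check that it does not create shortcuts. Vertices of $\overline{X}$ of degree at most $1$ are likewise deletable, and $G$ is connected.

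The main reduction concerns vertices $x\in X$. If some $x\in X$ has a small closed second neighbourhood in a suitable sense, we pick $x$ into the packing and form $G'$ from $G$ by putting one dominator $d$ into the dominating set and shrinking $X$. Concretely, set $X'=X\setminus N[S]$ for a small set $S$ of at most $5$ vertices that dominates all of $N_G[x]\cap X$, and possibly more of $X$ near $x$. We keep all vertices, so distances are preserved and any $X'$-packing avoiding $N^2[x]$ extends by $x$. To make that work we remove from $X'$ everything within distance $2$ of $x$, since those vertices cannot coexist with $x$ in a packing. Thus the reducible configuration is: \emph{$X\cap N^2[x]$ can be dominated by at most $5$ vertices.} In that case $\gamma_X(G)\le 5+\gamma_{X'}(G)$ and $\rho_X(G)\ge 1+\rho_{X'}(G)$ with $X'=X\setminus N^2[x]$. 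Minimality of the counterexample (applied to the smaller $X$, so we order counterexamples by $|X|$ first) contradicts the choice. So in a minimal counterexample, for every $x\in X$, the set $X\cap N^2[x]$ needs at least $6$ dominators. The analogue of Lemma~\ref{lem:neighbourslikeeachother} should be a sharper statement of this kind: neighbours of $x$ in $X$ share many second neighbours, which forces high degrees and many distinct distance-$2$ structures around each $x$.

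Finally we reach a contradiction with Euler's formula by discharging. Work in a plane embedding of $G$ (possibly after deleting useless edges as above, and arguing about faces of length $\ge 4$ created by the independence of $\overline{X}$). Assign charge $\deg(v)-6$ to vertices and $2|f|-6$ to faces, for total $-12$. Then redistribute so that every vertex and face ends nonnegative. The condition "$X\cap N^2[x]$ is not $5$-dominated" means that $x$ together with five well-chosen high-degree neighbours leaves some vertex undominated. This should yield, around each low-degree vertex, enough high-degree vertices or large faces to send it charge. Low-degree vertices of $\overline{X}$ (degree $2$ or $3$, with non-nested neighbourhoods) will also need charge from incident faces of length $\ge4$.

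The main obstacle is this discharging step. We must turn the non-$5$-domination condition into explicit local degree constraints strong enough to cover the charge deficit of vertices of degree at most $5$. The constant $5$ is presumably exactly tight for this argument. I would first try to prove that every $x\in X$ of degree at most $5$ has two neighbours sharing no common neighbour structure, and then let high-degree vertices send charge $1/2$ or so along each edge to such low-degree vertices.
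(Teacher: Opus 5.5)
Your proposal stops where the proof has to happen. The discharging is never carried out, and the reducible configuration it needs is missing. Your $X$-reduction is correct; it is the paper's Lemma~\ref{lem:reductions}(b), slightly generalized. But look at what it already gives you: $G$ is connected, so whenever $d_G(x)\le 5$ the set $\N_G(x)$ is a set of at most $5$ vertices dominating $X\cap N^2[x]$. Hence every $x\in X$ has degree at least $6$ and nonnegative initial charge.

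Your closing plan about vertices of $X$ of degree at most $5$ therefore targets vertices that are already reducible. The entire deficit sits at vertices $y\in\overline{X}$ of degree $2,\dots,5$, and your constraints on those do not control them. Those constraints are independence of $\overline{X}$, degree at least $2$, and non-nested neighbourhoods. For instance, none of them excludes a degree-$5$ vertex $y\in\overline{X}$ all of whose incident faces are triangles. Such a $y$ gets nothing from its faces. Its neighbours may all have degree exactly $6$ and hence zero charge, so your idea of high-degree vertices sending $\tfrac12$ per edge has no source. You also give no mechanism that turns the non-$5$-domination condition at $X$-vertices into charge for $y$.

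The missing idea is the paper's Lemma~\ref{lem:neighbourslikeeachother}. Take $y\in\overline{X}$ and $a\in\N_G(y)$, and suppose every other neighbour $c$ of $y$ has $\dist_{G-y}(a,c)\le 2$. Then delete the edge $ya$. Domination is unaffected. Restoring $ya$ creates no new path of length at most $2$ between two vertices of $X$ except paths $ayc$, and their endpoints are already at distance at most $2$ in $G-y\subseteq G-ya$. The configurations removed by your nested-neighbourhood reduction are a special case of this.

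Pair the lemma with one discharging rule: each face of length at least $4$ gives its whole charge $2\ell(f)-6$ equally to its $\overline{X}$-corners. There are at most $\lfloor \ell(f)/2\rfloor$ such corners because $\overline{X}$ is independent. So each corner gets at least $1$ from a $4$-face and at least $2$ from a longer face. A short case check on $d\in\{2,3,4,5\}$ then shows that $y$ receives at least $6-d$.

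Separately, your justification of the $\overline{X}$-edge deletion runs the wrong way. Deleting an edge can \emph{increase} $\rho_X$. What you need is that a packing of $G-y_1y_2$ stays a packing in $G$. This holds because every new path of length at most $2$ through $y_1y_2$ has an endpoint in $\{y_1,y_2\}\subseteq\overline{X}$. Your vertex deletions need the same check, that restoring the vertex creates no short path between packing vertices, and you leave it undone.
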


\subsection{A minimum counterexample}

Suppose for a contradiction that Theorem~\ref{thm:refined} is false, and choose a counterexample $(G,X)$ minimizing $|X|$, and subject to that minimizing $|V(G)|+|E(G)|$.   Here $X\subseteq V(G)$ is the set that must be dominated and from which the packing vertices must be chosen. If $G$ is disconnected, $G_1,\ldots,G_t$ are the components of $G$ and $X_i=X\cap V(G_i)$, then
$\gamma_X(G)=\sum_{i=1}^t\gamma_{X_i}(G_i) \leq 5\sum_{i=1}^t\rho_{X_i}(G_i) =5\rho_X(G)$, so we may assume that $G$ is connected.
 It is easy to see that a graph with at most one vertex is not a counterexample, so $|V(G)|\geq 2$. The reductions in Lemma~\ref{lem:reductions} are also deduced in~\cite{DuczGujgiczer26}, but we include the proof for completeness.\\

\begin{lemma}\label{lem:reductions}
The following statements hold.
\begin{enumerate}[label=\textup{(\alph*)}]
    \item $\overline{X}$ is independent.
    \item Every vertex of $X$ has degree at least $6$.
    \item Every vertex of $\overline{X}$ has degree at least $2$.
\end{enumerate}
\end{lemma}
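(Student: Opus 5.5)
We prove the three parts separately. Each part exhibits a configuration that, if present, yields a strictly smaller instance $(G',X')$ with $G'$ planar. By minimality $(G',X')$ satisfies $\gamma_{X'}(G')\le 5\rho_{X'}(G')$, and we transfer this inequality back to $(G,X)$, contradicting that $(G,X)$ is a counterexample. We use throughout that $G$ is connected with $|V(G)|\ge 2$, so every vertex has degree at least $1$.

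\textbf{Part (a).} Suppose $uv\in E(G)$ with $u,v\in\overline{X}$, and let $G'=G-uv$, keeping $X$ unchanged. Then $|X|$ is the same and $|V|+|E|$ drops, and $G'$ is planar.
\begin{itemize}
\item Domination: any set dominating $X$ in $G'$ also dominates $X$ in $G$, since deleting an edge only shrinks neighbourhoods. Hence $\gamma_X(G)\le\gamma_X(G')$.
\item Packing: take $p,q\in X$ with $\dist_G(p,q)\le 2$. A shortest $p$--$q$ path is either the edge $pq$ or a path $p w q$. Every edge on such a path has an endpoint in $\{p,q\}\subseteq X$, so none of them is $uv$. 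Thus $\dist_{G'}(p,q)\le 2$ as well. Since deleting an edge cannot decrease distances, pairs at distance at least $3$ in $G$ stay so in $G'$. Hence the $X$-packings of $G$ and $G'$ coincide, and $\rho_X(G')=\rho_X(G)$.
\end{itemize}
Combining, $\gamma_X(G)\le\gamma_X(G')\le 5\rho_X(G')=5\rho_X(G)$, a contradiction.

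\textbf{Part (b).} Suppose $x\in X$ has $1\le\deg(x)\le 5$. Keep $G$ but shrink the target set to
$$X'=X\setminus\{y:\dist_G(x,y)\le 2\}.$$
Then $x\notin X'$, so $|X'|<|X|$, and minimality gives $\gamma_{X'}(G)\le 5\rho_{X'}(G)$.
\begin{itemize}
\item Domination: let $D'$ be a minimum $X'$-dominating set. Every vertex within distance $2$ of $x$ is dominated by $N(x)$: this covers $x$ itself (as $\deg(x)\ge1$), the neighbours of $x$, and each vertex at distance exactly $2$, which is adjacent to some neighbour of $x$. Hence $D'\cup N(x)$ is $X$-dominating, and $\gamma_X(G)\le\gamma_{X'}(G)+5$.
\item Packing: let $P'$ be a maximum $X'$-packing. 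Every $p\in P'$ lies in $X'$, so $\dist_G(x,p)\ge 3$. Since $x\in X$, the set $P'\cup\{x\}$ is an $X$-packing, and $\rho_X(G)\ge\rho_{X'}(G)+1$.
\end{itemize}
Therefore $\gamma_X(G)\le 5\rho_{X'}(G)+5\le 5\rho_X(G)$, a contradiction.

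\textbf{Part (c).} Suppose $v\in\overline{X}$ has degree at most $1$; by connectivity its degree is exactly $1$. Let $G'=G-v$, which is planar, and keep $X$, which avoids $v$. Then $|X|$ is unchanged and $|V|+|E|$ drops.
\begin{itemize}
\item Domination: any $X$-dominating set of $G'$ is $X$-dominating in $G$, so $\gamma_X(G)\le\gamma_X(G')$.
\item Packing: a leaf is never an interior vertex of a shortest path, so all distances among vertices of $G'$ are unchanged. Hence $\rho_X(G')=\rho_X(G)$.
\end{itemize}
This contradicts minimality exactly as in part (a).

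The only step requiring care is part (b), in choosing $X'$. Removing only $N[x]$ from $X$ would keep the domination cost at $\deg(x)\le 5$, but a packing vertex of $P'$ could then lie at distance $2$ from $x$, and $P'\cup\{x\}$ would fail to be a packing. Removing the whole radius-$2$ ball around $x$ fixes the packing side. The domination cost stays at $\deg(x)\le 5$ because $N(x)$ already dominates that entire ball.
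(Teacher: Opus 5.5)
Your proof is correct and follows the paper's argument essentially step for step. You delete the $\overline{X}$--$\overline{X}$ edge in (a), shrink $X$ to the vertices at distance at least $3$ from $x$ and add $N(x)$ to the dominating set and $x$ to the packing in (b), and delete the leaf in (c); the only cosmetic difference is that you phrase the packing transfer as $\rho_X(G')=\rho_X(G)$ rather than checking that a particular packing $P'$ survives, which amounts to the same thing.
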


\begin{proof}
For (a), suppose that $y_1y_2\in E(G)$ with $y_1,y_2\in \overline{X}$, and let $H=G-y_1y_2$.  By minimality, there are an $X$-dominating set $D'$ and an $X$-packing $P'$ in $H$ such that
$|D'|\leq 5|P'|$.
Restoring the edge $y_1y_2$ (or adding any edge, for that matter) cannot spoil $X$-domination. Thus  $D'$ is also an $X$-dominating set in $G$.
Restoring $y_1y_2$ cannot spoil the packing property either. Indeed, a hypothetical new path of length $\le 2$ must traverse the new edge $y_1y_2$ and hence has at least one endpoint in $\{y_1,y_2\}$. Since $P'\subseteq X$ and $y_1,y_2\notin X$, no such path has both endpoints in $P'$. Thus  $P'$ is also an $X$-packing in $G$, a contradiction.

For (b), let $v\in X$ and suppose that $d_G(v)\leq 5$.  Since $G$ is connected and has at least two vertices, $d_G(v)\geq 1$.  Set
$$X'=\{u\in X\setminus\{v\}:\dist_G(u,v)\geq 3\}.$$
By minimality of $|X|$, there are an $X'$-dominating set $D'$ and an $X'$-packing $P'$ in $G$ satisfying
$|D'|\leq 5|P'|$.
Define
$$D=D'\cup \N_G(v) \qquad\text{and}\qquad P=P'\cup\{v\}.$$

We first verify domination.  Every vertex of $X'$ is dominated by $D'$ in $G$.  Every vertex of $X\setminus X'$ is either $v$ itself or is at distance at most two from $v$ in $G$, and is therefore dominated by $\N_G(v)$.  Here $v$ is dominated because $d_G(v)\geq1$.  Hence $D$ is an $X$-dominating set of $G$.

Next, $P\subseteq X$.  Since $P'\subseteq X'$, every vertex of $P'$ is at distance at least three from $v$ in $G$.  Since $P'$ is an $X'$-packing in $G$, it follows that $P$ is an $X$-packing of $G$.  Finally,
$|D|\leq |D'|+d_G(v)\leq 5|P'|+5=5|P|$, a contradiction.

For (c), let $y\in \overline{X}$ have degree at most one, and put $H=G-y$.  Since $y\notin X$, the set $X$ is contained in $V(H)$.  By minimality, there are an $X$-dominating set $D'$ and an $X$-packing $P'$ in $H$ such that $|D'|\leq5|P'|$.  The set $D'$ remains an $X$-dominating set after $y$ is restored.  Restoring an isolated vertex or a leaf cannot shorten the distance between two vertices already present in $H$, so $P'$ remains an $X$-packing.  This contradicts the choice of $(G,X)$.
\end{proof}

The next lemma is the key local reducibility statement. In particular it implies that after removal of a vertex $y\in \overline{X}$,  at least two of its neighbours must be at distance $\ge 3$.

\begin{lemma}\label{lem:neighbourslikeeachother}
For every $y\in \overline{X}$ and every $a\in\N_G(y)$, there exists
$c\in\N_G(y)\setminus\{a\}$ such that
$$\dist_{G-y}(a,c)\geq3.$$
\end{lemma}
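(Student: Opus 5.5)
We argue by contradiction. Suppose that for some $y\in\overline{X}$ and some $a\in\N_G(y)$, every $c\in\N_G(y)\setminus\{a\}$ satisfies $\dist_{G-y}(a,c)\le 2$. We want a smaller planar instance to which the minimality of $(G,X)$ applies, and whose optimal sets lift to $G$ without loss. Since $y\notin X$, the set $X$ survives any operation that removes $y$. This points to $H=G-y$, or to $H=G/ya$, the graph obtained by contracting the edge $ya$ onto $a$. Both are planar and have smaller $|V|+|E|$. By minimality we get an $X$-dominating set $D'$ and an $X$-packing $P'$ in $H$ with $|D'|\le 5|P'|$. The goal is to turn these into $D,P$ in $G$ with $|D|\le 5|P|$, contradicting the choice of $(G,X)$.

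\emph{First attempt: $H=G-y$.} Domination lifts for free, since adding a vertex and edges never destroys $X$-domination. Packing can only fail through a new path $p\,y\,q$ of length $2$. So $P'$ remains an $X$-packing in $G$ unless it contains two vertices $p,q\in\N_G(y)$ with $\dist_{G-y}(p,q)\ge 3$. Our assumption controls exactly those pairs that involve $a$. If $a\in P'$, then no other neighbour of $y$ lies in $P'$, because each is within distance $2$ of $a$ in $G-y$. So the bad case is when $P'$ avoids $a$ but meets $\N_G(y)$ in at least two vertices. To rule it out, I would modify $H$ so that all of $\N_G(y)$ becomes pairwise close while $a$ acts as a hub. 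Concretely, replace $y$ by $a$: take $H=G/ya$, so that $a$ becomes adjacent to every vertex of $\N_G(y)\setminus\{a\}$.

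\emph{Second attempt: $H=G/ya$.} Contraction does not increase distances, so $\dist_G\ge\dist_H$. Hence every $X$-packing of $H$ is an $X$-packing of $G$, and $P=P'$ works. The difficulty moves to domination. A vertex $c\in X\cap\N_G(y)$ may be dominated in $H$ only through a new edge $ac$ with $a\in D'$, and in $G$ it need not be dominated by $a$. Here the hypothesis $\dist_{G-y}(a,c)\le 2$ should be used. It suggests modifying $H$ less aggressively than a full contraction, so that the lifted dominating set stays valid. One option is to contract only when a common neighbour is absent. Another is to swap $a$ for $y$ in $D'$: $y$ dominates all of $\N_G(y)$ in $G$, and one then has to check that the $X$-vertices previously dominated by $a$ in $G$ are still covered.

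\emph{Main obstacle.} I expect the crux to be choosing the reduced instance $H$ so that both directions lift simultaneously. Packing needs distances in $H$ to be no larger than in $G$. Domination needs every $H$-edge used by $D'$ to correspond to an adjacency, or a replacement vertex, in $G$, without spending extra vertices. Since $|D|\le 5|P|$ leaves no slack, the intended route is probably via $H=G-y$, together with an argument that the bad configuration $p,q\in P'\cap\N_G(y)$ can be repaired. One repair is to find an alternative instance in which $a$ is forced into consideration, for example by shrinking $X$ as in Lemma~\ref{lem:reductions}(b). My first concrete attempt will be $H=G-y$ with the case analysis on $|P'\cap\N_G(y)|$, falling back on the contraction $G/ya$ with a vertex-swap argument if that fails.
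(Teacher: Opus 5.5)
\textbf{There is a genuine gap: your proposal does not yet prove the lemma.} It ends with two candidate reductions, and neither can be completed.

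For $H=G-y$, the bad configuration you isolate is two vertices $p,q\in P'\cap(\N_G(y)\setminus\{a\})$ with $\dist_{G-y}(p,q)\ge 3$. This configuration is simply not controlled by the hypothesis, which only concerns pairs containing $a$. Since $|D|\le 5|P|$ leaves no slack, you also cannot afford to drop $p$ or $q$ from $P'$. So a case analysis on $|P'\cap\N_G(y)|$ has nothing to work with.

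For $H=G/ya$, you note the problem yourself. A vertex $a\in D'$ dominates $(\N_G[a]\cup\N_G(y))\setminus\{y\}$ in $H$, but neither $a$ nor $y$ alone dominates this set in $G$. The hypothesis $\dist_{G-y}(a,c)\le 2$ says nothing about domination, so the vertex swap cannot be justified.

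\textbf{The missing idea is to delete only the edge $ya$, i.e.\ take $H=G-ya$.}
\begin{itemize}
\item $H$ is planar, has the same $X$ and fewer edges, so minimality gives an $X$-dominating set $D'$ and an $X$-packing $P'$ in $H$ with $|D'|\le 5|P'|$.
\item Adding an edge never destroys $X$-domination, so $D'$ is still $X$-dominating in $G$.
\item For the packing, any path of length at most $2$ in $G$ but not in $H$ must use the edge $ya$. Since $y\notin X\supseteq P'$, the only such path with both ends in $P'$ is $a\,y\,c$ with $c\in\N_G(y)\setminus\{a\}$ and $a,c\in P'$.
\item But $G-y$ is a subgraph of $H$, so $\dist_H(a,c)\le\dist_{G-y}(a,c)\le 2$. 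This contradicts $P'$ being a packing in $H$.
\end{itemize}
Hence $P'$ is an $X$-packing of $G$, contradicting the choice of $(G,X)$.

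The reason this works where $G-y$ fails: keeping $y$ in $H$ preserves the length-$2$ connections through $y$ between all other neighbours of $y$. So the only distances that can shrink when $ya$ is restored are those involving $a$, which are exactly the pairs your hypothesis controls. Deleting the whole vertex $y$ discards too much, and that is precisely what creates your uncontrolled bad case.
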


\begin{proof}
By Lemma~\ref{lem:reductions}(c), $y$ has at least two neighbors.  Suppose for a contradiction that some $a\in\N_G(y)$ satisfies
$\dist_{G-y}(a,c)\leq 2$ for every $c\in\N_G(y)\setminus\{a\}.$
Let $H=G-ya$.  By minimality, there are an $X$-dominating set $D'$ and an $X$-packing $P'$ in $H$ with $|D'|\leq5|P'|$.  As $X$-domination is preserved under edge-addition, $D'$ is also an $X$-dominating set in $G$.
Suppose that restoring edge $ya$ spoils the packing property.  Since $P'\subseteq X$ and $y\in \overline{X}$, we have $y\notin P'$.  Therefore the only possible new path of length at most two with both endpoints in $P'$ is a path $a y c$, where $c\in\N_G(y)\setminus\{a\}$ and $a,c\in P'$.  By assumption, however, $a$ and $c$ are already at distance at most two in $G-y$, which is a subgraph of $H$.  This contradicts that $P'$ is a packing in $H$.  Hence $P'$ is also an $X$-packing in $G$, a contradiction.
\end{proof}

\subsection{Discharging}

Fix a plane embedding of $G$.  The length $\ell(f)$ of a face $f$ is the length of its facial boundary walk, with repeated vertices and edges counted with multiplicity.

\begin{definition}[Corner and $\overline{X}$-corner]\label{def:corner}
Write the cyclic boundary walk of a face $f$ as
$$(v_0,e_1,v_1,\ldots,e_{\ell(f)},v_{\ell(f)}=v_0).$$
For $0\leq i<\ell(f)$, the vertex $v_i$ in this cyclic walk together with the angular sector of $f$ between the preceding and following edges, is a \emph{corner of $f$ at $v_i$}.  It is an \emph{$\overline{X}$-corner} if $v_i\in \overline{X}$. 
%Moreover, $v_i$ is called the vertex determining the $\overline{X}$-corner. 
Note that a cutvertex may determine several corners of the same face.  Let $k(f)$ be the number of $\overline{X}$-corners of $f$, counted with multiplicity. 
\end{definition}

While in general cut vertices may prevent a face from being bounded by a cycle, we do not have that problem for faces of length three or four.  Recalling that $G$ is simple loopless by assumption and has minimum degree at least two by Lemma~\ref{lem:reductions}, the next lemma is immediate.
\begin{lemma}\label{lem:faces}
Every face has length $\geq 3$, and every face of length $\in \{3,4\}$ is bounded by a cycle.
\end{lemma}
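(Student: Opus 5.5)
Both claims come from simplicity, connectivity, and the degree bounds of Lemma~\ref{lem:reductions}. Recall that $G$ is connected and has at least two vertices. Every vertex of $X$ has degree at least $6$ by Lemma~\ref{lem:reductions}(b), and every vertex of $\overline{X}$ has degree at least $2$ by Lemma~\ref{lem:reductions}(c). Hence $\delta(G)\ge 2$, and in particular $G$ is not a tree; indeed it is not even a single edge or a path.

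\textbf{Length at least $3$.} Consider the boundary walk of a face $f$. A face of length $1$ would be bounded by a loop, which is impossible since $G$ is simple. A face of length $2$ would be bounded either by two parallel edges, again impossible, or by a walk $u,e,v,e,u$ traversing a single edge twice. In the latter case that edge, together with its two endpoints, would be the entire graph: since $G$ is connected, the face would otherwise have more boundary. Then $G\cong K_2$, contradicting $\delta(G)\ge 2$. The length cannot be $0$ because $G$ has an edge. So $\ell(f)\ge 3$.

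\textbf{Lengths $3$ and $4$ give cycles.} Let $\ell(f)\in\{3,4\}$, and suppose the closed boundary walk is not a cycle. Then it repeats a vertex or an edge. If it traverses some edge $e=uv$ twice, I will argue that this forces a vertex of degree $1$ or a too-short face. A walk of length at most $4$ using $e$ twice consists of $e$ plus a closed walk of length at most $2$ at $u$ or at $v$, or else it is $e$ traversed back and forth with one more pendant edge. A closed walk of length at most $2$ in a simple graph is a degenerate back-and-forth traversal. Therefore the walk spans a tree with at most $2$ edges, and the face boundary containing it must be all of that component. Hence $G$ is a path on at most $3$ vertices, which has a leaf, contradicting $\delta\ge 2$.

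If instead no edge repeats but a vertex repeats, then the walk splits into two closed subwalks of total length at most $4$. Each subwalk has length at least $3$ in a simple graph, since it contains no repeated edge, so the total is at least $6>4$, a contradiction. Hence the boundary is a closed walk with no repeated vertices, which is a cycle.

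\textbf{Main obstacle.} The only real care needed is the edge-repeated case: I must make rigorous that a bridge traversed twice in a short facial walk forces a leaf. The cleanest route is the following. A facial walk traverses an edge twice if and only if it is a bridge. Each side of the bridge then contributes a closed subwalk of the facial walk, of length $0$ or at least $3$ when nonempty, because it contains a cycle when there is no leaf. The total length would then be at least $2+3$ unless one side is empty; an empty side means that endpoint has degree $1$, which Lemma~\ref{lem:reductions} rules out.
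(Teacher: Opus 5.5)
Your proof is correct and takes essentially the paper's route: the paper calls the lemma immediate from simplicity and $\delta(G)\ge 2$, and your final paragraph uses the same key fact, namely that a facial walk can turn straight back along an edge only at a degree-one vertex, while simplicity handles the repeated-vertex case. Your middle paragraph's claim that a short walk spanning a small tree must be the whole component is hand-wavy as written, but the final bridge argument replaces it, so nothing is missing.
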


%\begin{proof}
%Lemma~\ref{lem:reductions} gives minimum degree at least two.  A facial walk can immediately reverse an edge only at a vertex of degree one, so no facial walk of $G$ has an immediate reversal.  In a simple graph, a closed walk of length one or two, or a noncyclic closed walk of length three or four, necessarily contains a loop, a pair of parallel edges, or an immediate reversal.
%\end{proof}

Since $\overline{X}$ is independent, no two consecutive corners of a facial boundary walk are $\overline{X}$-corners.  Therefore
\begin{equation}\label{eq:k}
k(f)\leq\left\lfloor\frac{\ell(f)}2\right\rfloor.
\end{equation}

Now we start the discharging argument. We give every vertex $v$ and face $f$ initial charge
$$\mu(v)=d_G(v)-6  \quad \quad\text{ and } \quad \quad
    \mu(f)=2\ell(f)-6.$$
Euler's formula, together with $\sum_v d_G(v)=\sum_f\ell(f)=2|E(G)|$ then gives that the total sum of the charges
$\sum_{v\in V(G)}\mu(v)+\sum_f\mu(f)
    =(2|E|-6|V|)+(4|E|-6|F|)=-12$ is negative.
As is the standard approach, we will redistribute the charges so that in the end each vertex and face will have nonnegative charge,  while also always keeping the total sum of the charges the same $-12$, thus arriving at a contradiction. For that redistribution, we will need only one rule.\\

\textbf{Discharging rule}. 
%\emph{ Each face $f$ with $\ell(f)\geq4$ and $k(f)>0$ distributes all of its charge $2\ell(f)-6$ equally among its $\overline{X}$-corners. 
%Each face $f$ with $\ell(f)\geq4$ and $k(f)>0$ sends charge $\mu(f)/k(f)$ to each of its $\overline{X}$-corners. Subsequently, each vertex in $\overline{X}$ receives the charges of all its $\overline{X}$-corners.
\emph{Every face $f$ with $\ell(f)\geq4$ and $k(f)>0$ distributes its entire charge equally among its $\overline{X}$-corners: each such corner receives $\mu(f)/k(f)$. The charge received by a corner is subsequently passed to the vertex determining that corner.}
\\

Note that a face with $k(f)=0$ keeps its charge, and triangular faces have charge zero. Every $\overline{X}$-corner of a face $f$ receives charge $(2\ell(f)-6)/k(f)$, which by~\eqref{eq:k} is at least
\begin{equation}\label{eq:receipt}
    1\quad\text{if }\ell(f)=4,
    \qquad
    2\quad\text{if }\ell(f)\geq5.
\end{equation}
Indeed, a face of length four has charge $2$ and at most two $\overline{X}$-corners, while for $\ell(f)\geq5$ we have $2\ell(f)-6\geq2\lfloor\ell(f)/2\rfloor\geq2k(f)$.  Every face therefore finishes with nonnegative charge.  Every vertex of $X$ also finishes with nonnegative charge by Lemma~\ref{lem:reductions}(b), as does every vertex of $\overline{X}$ of degree at least six.

Let $y\in \overline{X}$ have degree $d\in\{2,3,4,5\}$.  List its neighbors cyclically as $v_1,\ldots,v_d$, and let $f_i$ be the face in the corner between $yv_i$ and $yv_{i+1}$, with indices modulo $d$.  Define
$$w_i=
    \begin{cases}
        0,&\text{if }\ell(f_i)=3,\\
        1,&\text{if }\ell(f_i)=4,\\
        2,&\text{if }\ell(f_i)\geq5.
    \end{cases}
    $$
By~\eqref{eq:receipt}, the charge received by $y$ is at least $\sum_{i=1}^{d} w_i$.  If $w_i=0$, then Lemma~\ref{lem:faces} implies $v_iv_{i+1}\in E(G-y)$, so then we say that the corner between $yv_i$ and $yv_{i+1}$ is \emph{triangular}.  If $w_i=1$, the same lemma implies there is a path of length two between $v_i$ and $v_{i+1}$ in $G-y$, so then we say the corner between $yv_i$ and $yv_{i+1}$ is \emph{quadrilateral}.

The next and last lemma produces a lower bound on the total charge that a degree $\le 5$ vertex of $\overline{X}$ receives.

\begin{lemma}\label{lem:weights}
For every $y\in \overline{X}$ of degree $d\leq5$,
$$\sum_{i=1}^d w_i\geq6-d.$$
\end{lemma}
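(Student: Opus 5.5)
The plan is to combine Lemma~\ref{lem:neighbourslikeeachother} with the geometric meaning of the weights. A triangular corner (weight $0$) gives $\dist_{G-y}(v_i,v_{i+1})=1$, and a quadrilateral corner (weight $1$) gives $\dist_{G-y}(v_i,v_{i+1})\le 2$. Only corners of weight $2$ give no distance information. By Lemma~\ref{lem:reductions}(c) we have $d\ge2$, so it remains to treat $d\in\{2,3,4,5\}$.

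In each case I would assume $\sum_i w_i<6-d$ and list the few weight patterns this allows, up to cyclic rotation and reflection. For each pattern I would exhibit a neighbour $a=v_j$ whose distance in $G-y$ to every other neighbour is at most $2$. This contradicts Lemma~\ref{lem:neighbourslikeeachother}. The distances are bounded by the triangle inequality along chains of short corners: two consecutive triangular corners $v_{j-1}v_j$ and $v_jv_{j+1}$ give $\dist_{G-y}(v_{j-1},v_{j+1})\le 2$.

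\emph{Case $d=2$.} We need $w_1+w_2\ge4$. Taking $a=v_1$, the lemma forces $\dist_{G-y}(v_1,v_2)\ge3$. So neither corner can be triangular or quadrilateral, and $w_1=w_2=2$. Each corner is counted separately even if $f_1=f_2$ as faces, so this is consistent with how $y$ receives charge.

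\emph{Case $d=3$.} We need a sum of at least $3$. If no corner has weight $2$, then all three pairs of neighbours are at distance at most $2$, and any choice of $a$ gives a contradiction. Otherwise a sum of at most $2$ forces the pattern $(2,0,0)$, with $v_3v_1$ the long corner. Then $v_2$ is adjacent in $G-y$ to both $v_1$ and $v_3$, so $a=v_2$ gives the contradiction.

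\emph{Case $d=4$.} We need a sum of at least $2$. A sum of at most $1$ means the patterns $(0,0,0,0)$ or $(1,0,0,0)$. In both, some vertex, say $v_2$ in the second pattern (where the quadrilateral corner is $v_1v_2$), sees its two cyclic neighbours via short corners: it is adjacent to $v_3$ and within distance $2$ of $v_1$. It also reaches $v_4$ through the two triangular corners $v_2v_3$ and $v_3v_4$. Hence all of $v_2$'s distances are at most $2$, a contradiction.

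\emph{Case $d=5$.} We need a sum of at least $1$. Sum $0$ means all five corners are triangular, so $v_1v_2v_3v_4v_5$ is a $5$-cycle in $G-y$. In a $5$-cycle every pair of vertices is at distance at most $2$, a contradiction.

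The only step needing care is choosing $a$ correctly in the asymmetric patterns for $d=3$ and $d=4$. The key point is to pick $a$ so that the quadrilateral corner is incident to $a$ itself, rather than lying on a chain whose length would exceed $2$.
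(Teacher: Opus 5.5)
Your proof is correct and follows the paper's argument. Like the paper, you assume $\sum_i w_i\le 5-d$, treat $d\in\{2,3,4,5\}$ separately, and use triangular and quadrilateral corners to bound distances in $G-y$, which contradicts Lemma~\ref{lem:neighbourslikeeachother}. The one cosmetic difference is that the paper shows \emph{all} pairs of neighbours of $y$ are within distance two in each case, so any $a$ works. For instance, in the $d=3$ pattern with the long corner at $v_3v_1$, also $a=v_1$ works via $v_2$. For $d=4$, every $v_j$ reaches the opposite neighbour along the triangular side. So the careful choice of $a$ you emphasize is not actually needed.
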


\begin{proof}
Suppose for a contradiction that $\sum_{i=1}^d w_i\leq5-d$. Then for every case below, we derive that all pairs of neighbors of $y$ are at distance at most two in $G-y$, contradicting Lemma~\ref{lem:neighbourslikeeachother}.

If $d=5$, then every $w_i$ is zero, so every corner is triangular.  Thus the neighbors of $y$ contain the cycle $v_1v_2v_3v_4v_5v_1$ in $G-y$, and every two of them are at distance at most two.

If $d=4$, then all corners are triangular except possibly one quadrilateral corner.  Relabeling if necessary, we may assume that the corner between $yv_4$ and $yv_1$ is the only corner that might be quadrilateral. The vertices $v_1, v_4$ are at distance at most two, and $v_1v_2v_3v_4$ must be a path.  Hence every two neighbors of $y$ are at distance at most two in $G-y$.

If $d=3$, then either every $w_i\leq1$, in which case each of the three neighbor-pairs is at distance at most two, or one weight is two and the other two are zero.  In the latter case the two triangular edges form a path of length two between the remaining pair.  Again every two neighbors are at distance at most two in $G-y$.

If $d=2$, then at least one of the two weights must be $\le 1$.
The corner corresponding to that weight is triangular or quadrilateral, so there is a path of length at most two between the two neighbors of $y$ in $G-y$.
%The face corresponding to that weight is incident to $y$ and has length at most four, so there is a path of length at most two between the two neighbors of $y$ in $G-y$. 

\end{proof}

\begin{proof}[Proof of Theorem~\ref{thm:refined}]
A vertex $y\in \overline{X}$ of degree $d\leq5$ starts with charge $d-6$ and, by Lemma~\ref{lem:weights}, receives at least $6-d$.  Hence it finishes with nonnegative charge.  As observed above, every other vertex and every face also finishes with nonnegative charge.  This contradicts the total charge $-12$.  Therefore a minimum counterexample does not exist, and $\gamma_X(G)\leq5\rho_X(G)$
holds for every finite simple planar graph $G$ and every $X\subseteq V(G)$.
\end{proof}

\begin{proof}[Proof of Theorem~\ref{thm:ordinary}]
Apply Theorem~\ref{thm:refined} with $X=V(G)$.
\end{proof}

\section{Unit disk graphs}
%\section{Finding a large $2$-separated set inside an independent set}
In this section we prove Theorem~\ref{thm:main}.
A subset $P\subset \mathbb{R}^2$ is called \emph{$c$-separated} if $d(x,y) > c$ for all distinct $x,y \in P$. Here $d(x,y)$  denotes the Euclidean distance between $x$ and $y$. To prove Theorem~\ref{thm:main}, we first derive an auxiliary lemma that finds a large $2$-separated set inside a given $1$-separated set.
The averaging step in the argument is standard and can be viewed as an instance of Blichfeldt’s principle in the geometry of numbers~\cite{Cassels97}. Related random shift constructions for unit disk graphs appear in~\cite{CaragiannisEtAl07} and~\cite{EsperetJulliotDeMesmay23}. 
%To keep the proof short, we did not optimize every step.

\begin{lemma}\label{lem:randomdisks}
Let $X\subset \mathbb R^2$ be a finite $1$-separated point set.
Then there exists a $2$-separated subset $Y\subseteq X$ of size $|Y|\ge \frac{\pi}{18\sqrt 3}\,|X|$.
\end{lemma}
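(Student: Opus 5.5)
The plan is a random-shift averaging argument using a triangular lattice of small disks. Let $\Lambda\subset\mathbb R^2$ be the triangular lattice with minimum distance $3$. Its fundamental cell has area $\frac{\sqrt3}{2}\cdot 3^2=\frac{9\sqrt3}{2}$. Let $U=\bigcup_{\lambda\in\Lambda}B^\circ(\lambda,\tfrac12)$ be the union of the open Euclidean disks of radius $\tfrac12$ centred at the lattice points. For a translation vector $t\in\mathbb R^2$, define $Y_t=X\cap(U+t)$.

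\textbf{Step 1: $Y_t$ is $2$-separated for every $t$.} Take two distinct points $x,x'\in Y_t$ and split into two cases.
\begin{itemize}
\item If $x,x'$ lie in the same open disk of radius $\tfrac12$, then $d(x,x')<1$. This contradicts the $1$-separation of $X$, so it cannot happen.
\item If $x,x'$ lie in disks centred at distinct lattice points $\lambda+t\neq\lambda'+t$, then $d(\lambda,\lambda')\ge 3$. The triangle inequality gives $d(x,x')>3-\tfrac12-\tfrac12=2$.
\end{itemize}
The inequality is strict because the disks are open. Hence $Y_t$ is $2$-separated, and each disk contains at most one point of $X$.

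\textbf{Step 2: averaging over the shift.} Choose $t$ uniformly at random from a fundamental cell $F$ of $\Lambda$. For a fixed point $x\in X$, we have $x\in U+t$ if and only if $x-t\in U$. As $t$ ranges over $F$, the point $x-t$ reduced modulo $\Lambda$ is uniformly distributed on the torus $\mathbb R^2/\Lambda$. The disks of radius $\tfrac12$ around distinct lattice points are disjoint, since $3>1$. So $U$ projects onto a region of area exactly $\pi/4$ in the torus. Therefore
$$\Pr[x\in Y_t]=\frac{\pi/4}{9\sqrt3/2}=\frac{\pi}{18\sqrt3}.$$
By linearity of expectation, $\mathbb E|Y_t|=\frac{\pi}{18\sqrt3}|X|$. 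Hence some shift $t$ satisfies $|Y_t|\ge\frac{\pi}{18\sqrt3}|X|$, which proves the lemma.

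\textbf{Main obstacle.} The delicate point is the strictness at the boundary. We need open disks so that points in distinct disks are at distance strictly greater than $2$. We also need disks of radius exactly $\tfrac12$, so that one disk cannot hold two points at distance greater than $1$. Only then do the constants $3$ and $\tfrac12$ combine to give exactly $\frac{\pi}{18\sqrt3}$. Using closed disks would break the first property, and a larger radius would break the second. Since $X$ is finite, measure-zero issues do not affect the expectation computation.
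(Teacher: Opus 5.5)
Your proposal is correct and follows the paper's proof essentially step for step: the same triangular lattice with minimum distance $3$, the same union of open radius-$\tfrac12$ disks, the same two-case argument for $2$-separation, and the same uniform random translation over a fundamental cell giving density $\frac{\pi/4}{9\sqrt3/2}=\frac{\pi}{18\sqrt3}$. You also spell out the injectivity of the disk's projection to the torus, which the paper takes for granted.
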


\begin{proof}
Let $\Lambda$ be the triangular lattice with shortest nonzero vector of length
$3$. For instance, take
$\Lambda = \left\{
        m(3,0)+n\left(\frac32,\frac{3\sqrt3}{2}\right)
        :m,n\in\mathbb Z
        \right\}.$
A fundamental parallelogram of $\Lambda$ has area $\det(\Lambda)=\frac{9\sqrt3}{2}$.
Let
$$ S=\bigcup_{\lambda\in\Lambda} B(\lambda,1/2),$$
where $B(\lambda,1/2)$ denotes the open Euclidean disk of radius $1/2$ centred
at $\lambda$. For a translation vector $t\in\mathbb R^2$, define
$$Y_t=X\cap (S+t).$$
We first observe that $Y_t$ is always $2$-separated. Indeed, suppose
$x,x'\in Y_t$ are distinct. If $x$ and $x'$ lie in the same open disk of radius
$1/2$, then $d(x,x')<1$,
contradicting the hypothesis on $X$. Hence they lie in two distinct translated
disks. The centres of distinct disks in $S+t$ are at distance at least $3$.
Since the disks are open and have radius $1/2$, we get $d(x,x')>3-\frac12-\frac12=2$.
Thus $Y_t$ is $2$-separated.

%This is a standard argument, but here are more details if someone asks:
%Now choose $t$ uniformly at random from a fundamental parallelogram $F$ of $\Lambda$. Fix $x\in X$. Since $t$ is uniform modulo $\Lambda$ and $x\in S+t$, the point $x-t \in S$ is also uniform modulo $\Lambda$. Modulo $\Lambda$, the periodic set $S$ consists of one disk of radius $1/2$ inside a fundamental parallelogram, possibly cut into pieces by the boundary of $F$. These pieces have total area $\operatorname{area}(B(0,1/2))=\pi/4$. The projection is injective because the shortest nonzero vector of $\Lambda$ has length $3$. Hence $$\mathbb P(x\in S+t) = \frac{\operatorname{area}(B(0,1/2))}{\det(\Lambda)} =\frac{\pi}{18\sqrt3}.$$

Now choose $t$ uniformly at random from a fundamental parallelogram of
$\Lambda$. For every fixed point $x\in X$, the probability that
$x\in S+t$ is exactly the density of $S$, namely
$$\frac{\operatorname{area}(B(0,1/2))}{\det(\Lambda)} = \frac{\pi/4}{9\sqrt3/2} = \frac{\pi}{18\sqrt3}.$$

Therefore the expected number of points in $Y_t$ is $\mathbb E|Y_t|= \sum_{x\in X}\mathbb P(x\in S+t) = \frac{\pi}{18\sqrt3}\,|X|$.
So for at least one translation $t$, we have $|Y_t|\ge \frac{\pi}{18\sqrt3}\,|X|$.
\end{proof}

\begin{proof}[Proof of Theorem~\ref{thm:main}]
Fix a unit disk representation  $p:V(G)\to\mathbb R^2$
such that, for distinct vertices $u,v\in V(G)$, we have
$uv\in E(G)$ if and only if $d(p(u),p(v))\le 1.$
Let $I$ be a maximal independent set of $G$. Due to maximality $I$ is dominating, so $\gamma(G)\le |I|$.
Since $I$ is independent in $G$, we have $d(p(u),p(v))>1$ for all distinct $u,v\in I$.
Applying Lemma~\ref{lem:randomdisks} to the point set $p(I)$, we obtain a $2$-separated subset $Y\subseteq p(I)$ with $|Y|\ge \frac{\pi}{18\sqrt3}\,|I|$.

%Originally I wrote ``Let $P=p^{-1}(Y)\subseteq I$ be the corresponding set of vertices of $G$." Pedantically, this is not quite right because $p$ might not be injective. Multiple disks could have the same center. So need P=\{u\in I:p(u)\in Y\} instead.
Let $ P=\{u\in I:p(u)\in Y\}\subseteq I$ be the corresponding set of vertices of $G$. 
We claim that $P$ is
a packing in $G$. Indeed, if distinct vertices $u,v\in P$ had a common vertex
in their closed neighbourhoods, say
 $w\in N_G[u]\cap N_G[v]$,
then due to the triangle inequality $ d(p(u),p(v))
        \le
        d(p(u),p(w))+d(p(w),p(v))
        \le 2,$
contradicting that $Y$ is $2$-separated. Thus $P$ is a packing, and so
$       \rho(G)\ge |P| \ge
        \frac{\pi}{18\sqrt3}\,|I|  \ge \frac{\pi}{18\sqrt3}\,\gamma(G).$
\end{proof}

\section{Concluding remarks}

\subsection{Planar graphs}
MacGillivray and Seyffarth~\cite{MacGillivraySeyffarth96} gave an example of a planar graph $H$ with diameter $2$ and domination number $3$. Later, Goddard and Henning~\cite{GoddardHenning02} showed that $H$ is the unique such graph. 
In part inspired by the fact that $\rho(H)=1$ and $\gamma(H)=3$, D\'ucz and Gujgiczer~\cite{DuczGujgiczer26} conjectured that $\gamma(G)\le 3 \cdot \rho(G)$ is the optimal bound for planar $G$. 
%The upper bound holds true for every maximal outerplanar graph~\cite{GomezGutierrez25}. This was improved to a factor $2$ by~\cite{BonamyCsikosGujgiczerYuditsky25}
We were not yet able to reduce the multiplicative constant below $5$, let alone make it $3$, but it does appear there is a way to strengthen our Theorem~\ref{thm:ordinary} to a bound for which the same $H$ is sharp, namely $\gamma(G)\leq 5\cdot \rho(G) -2$ for every planar $G$.  Most of the proof would stay the same, but the natural modification $\gamma_X(G)\leq 5\cdot \rho_X(G)-2$ of Theorem~\ref{thm:refined}  needs to be stated for all $G,X$ that are \emph{non-empty}. This necessitates a separate treatment of the case that $X'=\emptyset$ and $\rho_X(G)=1$ in the proof of Lemma~\ref{lem:reductions}(b). This in turn needs a proof that if the packing number $\rho_X(G)$ equals $1$, then $\gamma_X(G) \le 3$. 
For $X=V(G)$, this follows from a theorem of MacGillivray and
Seyffarth~\cite[Theorem~1]{MacGillivraySeyffarth96}. For its extension to arbitrary $X$, the separation and connector arguments of~\cite{GoddardHenning02}, particularly Lemma 13, appear relevant but in need of multiple pages of proof. The latter approach was suggested by ChatGPT 5.6 Sol in the final phase of this project, when we uploaded this draft and asked it to find this specific improvement. Since we prefer to keep the note short and easy to follow, we do not pursue that technical extension here. 
Upon asking the model to lower the multiplicative constant $5$, it made some structural progress towards a potential ratio $9/2$, but was still far from closing any argument.

In the introduction we discussed that for every $t$, there exists $c_t>0$ such that $\gamma(G) \leq c_t \cdot \rho(G)$ for every $K_t$-minor-free graph $G$, due to Bousquet et al.~\cite{BousquetEtAl21}. Their argument was designed to work for arbitrary ball hypergraphs and consequently the constant $c_t$ obtained there is quite large, of the order $\exp(t(\log t)^{3/2})$. As planar graphs are $K_5$-minor-free, the following is a natural optimization problem.

\begin{question}
What is the smallest constant $c>0$ such that $\gamma(G)\leq c \cdot \rho(G)$ for every $K_5$-minor-free graph $G$?
\end{question}
%How does it compare to Theorem~\ref{thm:ordinary}?

The graph $H$ mentioned above shows $3\le c$ and a result of B\"ohme and Mohar~\cite{BohmeMohar03} for graphs excluding $K_{4,4}$ as a minor immediately implies $c\leq 31$. Could we have $c\le 5$?
%, as in Theorem~\ref{thm:ordinary}??

\subsection{Unit disk graphs}
Any improvement of the constant $c=\frac{\pi}{18 \sqrt{3}}$ in Lemma~\ref{lem:randomdisks} directly implies an improvement $\gamma(G) \le \frac{1}{c} \rho(G)$ in Theorem~\ref{thm:main}. We suspect that $c\approx\frac{1}{6}$ is achievable, but that the "hail shot" approach of choosing some fixed lattice independent of $P$, and then taking a (uniformly) random translation, has almost been optimized. Preliminary computations indicate that small gains are possible by slightly adapting the shape and size of the radius $1/2$ balls around the lattice points, or by taking a random rotation in addition to a random translation. However, these modifications make the proofs much longer, more technical and ugly, in return for only minor improvements. So far, the best ratio that we could heuristically obtain in this way is 
$\gamma(G)\le  \frac{9\pi}{9\sqrt3-4\pi}\,\rho(G) \approx 9.36 \,\rho(G)$ via randomly rotated and translated regular hexagons.
For significant further improvements, a more direct approach tailored to the specific structure of the given point sets $P$ and the Euclidean metric may be necessary.

\paragraph{Tool and computational resource disclosure.}
The author made use of DuckDuckGo, Google, Overleaf, Python, ChatGPT, Claude, laptop, coffee and human neurons.

\bibliography{bib}
\end{document}